\newcommand{\R}{\mathbb{R}}
\newtheorem{theorem}{Theorem}[section]
\newtheorem{lemma}{Lemma}[section]
\newtheorem{corollary}{Corollary}[section]
\newtheorem{remark}{Remark}[section]
\begin{document}

  \title{Upper and Lower Estimates\\
  	for the Separation of Solutions\\
  	to Fractional Differential Equations}
 \author{Kai Diethelm\footnote{Faculty of 
 	Applied Natural Sciences and Humanities (FANG), University of Applied Sciences W\"urzburg-Schweinfurt,
	Ignaz-Sch\"on-Str.\ 11, 97421 Schweinfurt, Germany,
	e-mail: kai.diethelm@fhws.de, ORCID: 0000-0002-7276-454X}
	\ and
	Hoang The Tuan\footnote{Institute of Mathematics, Vietnam Academy of Science and Technology,
	18 Hoang Quoc Viet, Cau Giay, Ha Noi, Viet Nam,
	e-mail: httuan@math.ac.vn, ORCID: 0000-0001-7184-1307}
}

\maketitle


\begin{abstract}

	Given a fractional differential equation of order $\alpha \in (0,1]$
	with Caputo derivatives, we investigate in a quantitative sense how the
	associated solutions depend on their respective initial conditions.
	Specifically, we look at two solutions $x_1$ and $x_2$, say,
	of the same differential equation,
	both of which are assumed to be defined on a common interval $[0,T]$,
	and provide upper and lower bounds for the difference $x_1(t) - x_2(t)$
	for all $t \in [0,T]$ that are stronger than the bounds previously
	described in the literature.

\end{abstract}

\medskip

\emph{MSC 2020:} Primary 34A08; Secondary 34A12

\smallskip

\emph{Key words and Phrases}: fractional differential equation; Caputo derivative; initial condition;
	separation of solutions


\section{Introduction and motivation} \label{sec:intro}

\setcounter{equation}{0}\setcounter{theorem}{0}

\subsection{Statement of the problem} 

Initial value problems for fractional differential equations with Caputo derivatives
have proven to be important tools for the mathematical modeling of various phenomena in
science and engineering, see, e.g., 
\cite{BDST2016,BL2019a,BL2019b,Ho2019,Ma2010,Pe2019,Ta2019a,Ta2019b}.
In order to fully understand the
behaviour of such models, it is of interest to precisely describe how their solutions
depend on the initial values. In particular, we shall here look at the following question:
\begin{quote}
	{\sl
	Given two solutions $x_1$ and $x_2$ to the fractional differential equation
	\begin{equation}
		\label{1dimEq}
		{}^{C\!}D_{0+}^{\alpha} x(t) = f(t, x(t)),
	\end{equation}
	where $^{C\!}D_{0+}^{\alpha}$ denotes the Caputo type differential
	operator of order $\alpha \in (0,1]$ with starting point $0$
	\cite[\S 3]{Di2010}, associated to the initial conditions
	$x_1(0) = x_{01}$ and $x_2(0) = x_{02}$, respectively,
	what can be said about the difference $x_1(t) - x_2(t)$ for all $t$
	for which both solutions exist?}
\end{quote}

\vskip 2pt

Our aim is to provide both upper and lower bounds for the difference. A review of the
literature (see Section 
\ref{sec:existing} below) reveals that such bounds exist
in principle, but that in many cases they tend to
be too far away from each other to be of practical use. In other words,
one usually observes that at least one of the two bounds is very weak.
(A concrete example for such a situtation is given in Section 
\ref{sec:example}.)
Therefore, we shall derive tighter inclusions here.

\subsection{Motivation}

From a purely mathematical point of view, such estimates are relevant in
their own right as they allow to draw interesting conclusions about the
behaviour of the solution to the differential equation \eqref{1dimEq}.

In addition, our interest in this question is especially motivated by an application in the
numerical analysis of fractional differential equations that strongly benefits from
tight inclusions. Specifically, one is sometimes interested in \emph{terminal
value problems}, i.e.\ problems of the form
\begin{equation}
	\label{eq:tvp}
	{}^{C\!} D_{0+}^\alpha x(t) = g(t, x(t)),
	\quad
	x(T) = x^*
\end{equation}
with some $T > 0$, and seeks the solution to \eqref{eq:tvp} on the interval $[0,T]$, cf., e.g.,
\cite[pp.\ 107ff.]{Di2010} or \cite{DF2018,FM2011}. For the numerical solution of such problems,
one may apply a so-called \emph{shooting method} \cite{Di2015,DF2012,DU2021,FM2011,FMR2014},
i.e.\ one starts with a first guess $x_{0,1}$ for $x(0)$, (numerically) solves the initial value problem
consisting of the differential equation given in \eqref{eq:tvp} and the initial condition $x(0) = x_{0,1}$,
and in this way obtains a first approximate solution $x^*_1$ for $x(T) = x^*$. One then compares this
approximation $x^*_1$ with the exact value $x^*$, replaces the guess $x_{0,1}$ for the initial value
by a new and improved value $x_{0,2}$ and repeats the process. In order to determine a suitable
choice for $x_{0,2}$, it is useful to have the estimates of the form indicated above because they
describe a connection between $x_{0,2} - x_{0,1}$ one the one hand and $x^* - x^*_1$ on the
other hand, thus telling us which range the new value $x_{01}$ needs to come from in order for the
corresponding initial value problem to have a solution that ``hits'' the required terminal value
as accurately as possible.


\section{Preliminaries}\label{sec.preliminaries}

\setcounter{equation}{0}\setcounter{theorem}{0}

Throughout this paper, we shall use the following conventions.

\vskip 3pt

Let $\alpha\in (0,1]$, $b > 0$, $[0,b]\subset \R$ and $x:[0,b]\rightarrow \R$ be a measurable function
such that $\int_0^b|x(\tau)|\;d\tau<\infty$. The \emph{Riemann--Liouville integral operator
of order $\alpha$} is defined by
\vskip -12pt
\[
	(I_{0+}^{\alpha}x)(t):=\frac{1}{\Gamma(\alpha)}\int_{0}^t(t-\tau)^{\alpha-1}x(\tau)\;d\tau
\]
\vskip -2pt \noindent
for $t \in [0, b]$, where $\Gamma(\cdot)$ is the Gamma function.
The \emph{Riemann--Liouville fractional derivative} $^{RL\!}D_{0+}^\alpha x$ of $x$ on $[0,b]$ is defined by
\[
	({}^{RL\!}D_{0+}^\alpha x) (t):=(DI_{0+}^{1-\alpha}x)(t),\quad\text{for almost all}\;t\in [0,b],
\]
where $D=\frac{d}{dt}$ is the usual derivative. The \emph{Caputo fractional derivative}
of $x$ on $[0,b]$ is defined by
\[
	({}^{C\!}D^\alpha_{0+}x)(t) =  ({}^{RL\!}D_{0+}^\alpha [x-x(0)])(t) \quad \text{for almost all}\;t\in [0,b].
\]

Finally, by $E_\alpha$ we denote the standard one-parameter Mittag-Leffler function, viz.
\vskip -14pt
\[
	E_\alpha(t) = \sum_{k=0}^\infty \frac{t^k}{\Gamma(\alpha k + 1)},
\]
We can cite from \cite[Proposition 3.5]{GKMR2020} the following asysmptotic result that we shall use later:


\begin{lemma}	\label{lemma:ml-asymp}
	Let $\lambda \in \R \setminus \{ 0 \}$. For $t \to \infty$ we have
	\[
		E_\alpha(\lambda t^\alpha)
		= \begin{cases}
				\displaystyle \frac 1 \alpha \exp(\lambda^{1/\alpha} t) + O(t^{-\alpha})
					& \mbox{ for } \lambda > 0, \smallskip \\
				\displaystyle - \lambda \frac{t^{-\alpha}}{\Gamma(1-\alpha)} + O(t^{-2\alpha})
					&\mbox{ for } \lambda < 0,
			\end{cases}
	\]
	so $E_\alpha(\lambda t^\alpha)$ grows exponentially towards $\infty$ if $\lambda > 0$ and
	decays algebraically towards $0$ if $\lambda < 0$.
\end{lemma}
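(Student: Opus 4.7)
My plan is to derive both asymptotic expansions from the classical Hankel-contour representation
\[
	E_\alpha(z) = \frac{1}{2\pi i \alpha} \int_{\mathcal{H}} \frac{e^{\zeta^{1/\alpha}}}{\zeta - z}\, d\zeta,
\]
which one obtains by inserting the Hankel formula for $1/\Gamma(\alpha k+1)$ (after the substitution $u = \zeta^{1/\alpha}$) into the defining power series of $E_\alpha$ and summing the resulting geometric series in $z/\zeta$. Here $\mathcal{H}$ is a contour that runs from $-\infty$ below the cut along the negative real axis, loops around the origin, and returns to $-\infty$ above the cut, with $|\zeta|>|z|$ on the outer portion; the branch of $\zeta^{1/\alpha}$ is the principal one.

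For $\lambda>0$ I would set $z=\lambda t^\alpha$ and deform $\mathcal{H}$ outward so that it encloses the simple pole at $\zeta=z$ on the positive real axis. The residue contributes $\frac{1}{\alpha}\exp(z^{1/\alpha})=\frac{1}{\alpha}e^{\lambda^{1/\alpha}t}$, and a rescaling $\zeta = z\eta$ shows that the remaining loop integral is $O(|z|^{-1})=O(t^{-\alpha})$. For $\lambda<0$ no pole is crossed when $\mathcal{H}$ is shrunk, because $z$ lies on the negative real axis. Instead I would expand
\[
	\frac{1}{\zeta-z} = -\sum_{k=0}^{N}\frac{\zeta^k}{z^{k+1}} + \frac{\zeta^{N+1}}{z^{N+1}(\zeta-z)}
\]
and integrate term by term; the Hankel formula, read in reverse, identifies the $k$-th integral with $1/\Gamma(1-\alpha(k+1))$, producing the classical expansion $E_\alpha(z) = -\sum_{k=1}^{N} z^{-k}/\Gamma(1-\alpha k) + O(|z|^{-N-1})$. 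Taking $N=1$ yields the claimed $t^{-\alpha}$ leading term with remainder $O(t^{-2\alpha})$.

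The main technical obstacle is justifying the contour deformations and showing that the remainder integrals really are of the asserted order uniformly as $t\to\infty$. Concretely, one must choose the radius of the loop in $\mathcal{H}$ to grow proportionally with $|z|$ so that $\mathrm{Re}(\zeta^{1/\alpha})$ is controlled on the far arcs while $|\zeta-z|$ stays bounded below, and then verify that the contribution from the tails along the two sides of the cut decays at the rate promised. All of this is classical and worked out in detail in \cite{GKMR2020}, so my sketch follows the same outline rather than introducing a new argument.
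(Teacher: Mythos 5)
The paper does not prove this lemma at all---it is quoted verbatim from \cite[Proposition 3.5]{GKMR2020}---and your sketch is exactly the classical Hankel-contour argument given in that reference, so the approach coincides with the source the paper relies on and the outline is correct. One point worth recording: for $\lambda<0$ your expansion gives the leading term $-z^{-1}/\Gamma(1-\alpha)=-\lambda^{-1}t^{-\alpha}/\Gamma(1-\alpha)$, whereas the lemma as printed has the coefficient $-\lambda$ instead of $-\lambda^{-1}$; your constant is the correct one (the statement contains a typo), although both are positive when $\lambda<0$, so the qualitative conclusion of algebraic decay to $0$---which is all the paper uses---is unaffected.
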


Let now $J = [0,T]$ with some real number $T > 0$ or $J=[0,\infty)$.
As indicated above, we consider the equation \eqref{1dimEq} in this note.
In particular, we shall only discuss the case that $f:J \times \R \to \R$ is a continuous function.
Moreover, we shall generally assume that $f$ satisfies the following
Lipschitz condition on the second variable: there exists a nonnegative continuous
function $L: J \rightarrow \R_+$ such that
\begin{equation}
	\label{eqn.Lip}
	|f(t,x)-f(t,y)| \leq L(t)|x-y| \qquad \hbox{for all } t\in J \hbox{ and all } x,y \in \R.
\end{equation}

\vskip 3pt

The following fundamental results are then known (see \cite{CT2017,Di2010,DST2017}): 


\begin{theorem}
	\label{thm:fundamental}
	Assume that the function $f$ is continuous and satisfies the Lipschitz
	condition \eqref{eqn.Lip}. Moreover, let $x_{10}$ and  $x_{20}$ be two arbitrary real
	numbers with $x_{10} \not= x_{20}$ and consider the two initial value problems
\vskip-11pt
	\begin{subequations}
		\begin{equation}
			\label{eq:ivp1}
			{}^{C\!}D_{0+}^{\alpha} x_1(t) = f(t, x_1(t)), \quad x_1(0) = x_{10}
		\end{equation}
\vskip -4pt \noindent
		and
\vskip -13pt
		\begin{equation}
			\label{eq:ivp2}
			{}^{C\!}D_{0+}^{\alpha} x_2(t) = f(t, x_2(t)), \quad x_2(0) = x_{20},
		\end{equation}
	\end{subequations}
	respectively. Then we have:
	\begin{enumerate}
	\item[(i)] For each of the initial value problems, there exists a unique continuous function
		that solves the problem on the entire interval $J$.
	\item[(ii)] The trajectories of the two solutions do not meet on $J$, i.e., the solutions $x_1(\cdot)$ and
		$x_2(\cdot)$ of \eqref{eq:ivp1} and \eqref{eq:ivp2}, respectively,
		satisfy $x_1(t) \not= x_2(t)$ for all $t \in J$.
	\item[(iii)] In particular, if $x_{10} < x_{20}$ then $x_1(t) < x_2(t)$ for all $t \in J$.
	\end{enumerate}
\end{theorem}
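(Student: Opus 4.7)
The plan is to handle the three claims in order. For (i), I will recast each initial value problem as the equivalent Volterra integral equation
\[
x_i(t) = x_{i0} + \frac{1}{\Gamma(\alpha)}\int_0^t (t-s)^{\alpha-1} f(s, x_i(s))\,ds\qquad (i=1,2),
\]
and apply the Banach fixed-point theorem on $C(J')$ for any compact subinterval $J'\subset J$, equipped with a Bielecki-type weighted norm $\|x\|_\lambda := \sup_{t\in J'} e^{-\lambda t}|x(t)|$. Taking $\lambda$ large enough makes the integral operator a contraction, thanks to the Lipschitz hypothesis~\eqref{eqn.Lip}. If $J = [0,\infty)$, a continuation argument promotes the local solution to a global one, with a routine fractional Gronwall estimate preventing finite-time blow-up; uniqueness is automatic from the contraction.

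For (iii), assume $x_{10} < x_{20}$, set $u(t) := x_2(t) - x_1(t)$, and note that $u(0) > 0$ and, by linearity of the Caputo operator,
\[
u(t) = u(0) + \frac{1}{\Gamma(\alpha)}\int_0^t (t-s)^{\alpha-1}\bigl[f(s, x_2(s)) - f(s, x_1(s))\bigr]\,ds.
\]
My target is the pointwise lower estimate $u(t) \geq u(0)\, E_\alpha(-L^* t^\alpha) > 0$ on any compact $J'\subset J$, where $L^* := \sup_{J'} L$. The candidate barrier $\xi(t) := u(0) E_\alpha(-L^* t^\alpha)$ satisfies ${}^{C\!}D^{\alpha}_{0+}\xi = -L^*\xi$ with $\xi(0) = u(0)$, and is strictly positive on $J$ by Lemma~\ref{lemma:ml-asymp} together with the classical positivity of $E_\alpha$ on $\mathbb{R}$ for $\alpha\in(0,1]$. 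Setting $a(s) := [f(s, x_2(s)) - f(s, x_1(s))]/u(s)$ on $\{u\ne 0\}$ (and zero otherwise), so that $|a|\leq L\leq L^*$, the difference $w := u - \xi$ satisfies $w(0) = 0$ and
\[
{}^{C\!}D^{\alpha}_{0+}w(t) = a(t)\,w(t) + \bigl(a(t)+L^*\bigr)\xi(t),
\]
where the forcing term $(a+L^*)\xi$ is non-negative. A positivity principle for linear non-autonomous Caputo equations then gives $w \geq 0$, hence $u \geq \xi > 0$. Claim (ii) is then immediate: for any $x_{10}\ne x_{20}$ one orders the data and invokes (iii).

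The hard part will be justifying the positivity principle in the last step, because the linearized coefficient $a(t)$ has indefinite sign, so standard fractional Gronwall yields only an upper bound on $|u|$. The cleanest route is to write the unique solution of the linear inhomogeneous Caputo equation ${}^{C\!}D^{\alpha}_{0+}w = a(t)w + g(t)$, $w(0)=0$, as a Picard--Peano--Baker series in $g$ whose resolvent kernel is bounded below by a positive Mittag-Leffler-type kernel; the strict positivity of the relevant two-parameter Mittag-Leffler function on $\mathbb{R}$ for $\alpha\in(0,1]$ (a consequence of complete monotonicity) then closes the argument. A more direct but more delicate alternative is a contradiction argument at the (supposed) first zero $t_0 > 0$ of $u$: the nonlocal integral representation of ${}^{C\!}D^{\alpha}_{0+}u(t_0)$ is strictly negative, since every contribution comes from the sub-interval on which $u > u(t_0) = 0$, contradicting ${}^{C\!}D^{\alpha}_{0+}u(t_0) = f(t_0,x_2(t_0)) - f(t_0,x_1(t_0)) = 0$.
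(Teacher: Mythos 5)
The paper does not actually prove this theorem from scratch: part (i) is cited from \cite[Theorem 6.5]{Di2010} (with \cite[Corollary 2.4]{DST2017} for $J=[0,\infty)$), part (ii) is cited from \cite[Theorem 3.5]{CT2017}, and part (iii) is deduced from (ii) by continuity and the intermediate value theorem. Your proposal instead builds a self-contained proof, and it also reverses the logical order of the paper by establishing (iii) directly and obtaining (ii) as a consequence. Your treatment of (i) via the equivalent Volterra equation and a Bielecki-weighted contraction is the standard argument and is fine; note that because the Lipschitz condition \eqref{eqn.Lip} is global in $x$, the contraction already gives existence and uniqueness on every compact $[0,T]$, so the continuation/Gronwall step for $J=[0,\infty)$ is not really needed beyond gluing by uniqueness.

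For (iii), the one step that does not hold up as stated is your ``positivity principle for linear non-autonomous Caputo equations'' with the sign-indefinite coefficient $a(t)$. The Peano--Baker/Picard series for ${}^{C\!}D^\alpha_{0+}w=a(t)w+g$ has terms $I^\alpha(aI^\alpha(a\cdots))g$ that alternate in sign when $a$ is negative somewhere, and in the fractional setting there is no product formula for the resolvent analogous to $\exp\bigl(\int_s^t a\bigr)$, so ``the resolvent kernel is bounded below by a positive Mittag-Leffler-type kernel'' is precisely the assertion in need of proof; as written it is essentially circular with the comparison principle you want. The fix is either (a) your own fallback, the first-zero contradiction: if $t_0$ is the first zero of $u$, then the representation ${}^{C\!}D^\alpha_{0+}u(t_0)=\frac{1}{\Gamma(1-\alpha)}\bigl[\frac{u(t_0)-u(0)}{t_0^\alpha}+\alpha\int_0^{t_0}\frac{u(t_0)-u(s)}{(t_0-s)^{1+\alpha}}\,ds\bigr]$ is strictly negative while the equation forces it to equal $f(t_0,x_2(t_0))-f(t_0,x_1(t_0))=0$ (this is, in substance, the proof in \cite{CT2017} and \cite{Di2008}, and it needs a word on the regularity of $u=u(0)+I^\alpha g$ near $t_0$ to justify the integral's convergence); or (b) rewriting your equation with the frozen coefficient, ${}^{C\!}D^\alpha_{0+}w=-L^*w+(a+L^*)u$, applying the constant-coefficient variation-of-constants formula with the positive kernel $E_{\alpha,\alpha}$, and running a first-crossing argument in $u$. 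Either way, observe that the quantitative barrier $u(t)\ge u(0)E_\alpha(-L^*t^\alpha)$ you aim for is the paper's Theorem \ref{thm:lower}, which is strictly more than Theorem \ref{thm:fundamental} requires; the first-zero argument alone already yields (iii) and (ii).
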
  

Note that the two initial value problems \eqref{eq:ivp1} and \eqref{eq:ivp2} differ only in their
initial conditions but contain the same differential equation (which is also the same as the differential
equation given in \eqref{1dimEq}).

\begin{proof}
	Part (i) immediately follows from \cite[Theorem 6.5]{Di2010} (see also \cite[Theorem 2.3]{DST2017}
	or \cite{Ti2012})
	for the case of a finite interval; the extension to the case $J = [0, \infty)$ is immediate,
	cf.\ \cite[Corollary 2.4]{DST2017}. Part (ii)
	has been shown in \cite[Theorem 3.5]{CT2017}, and part (iii) is a direct consequence
	of (ii) in connection with the continuity of $x_1$ and $x_2$ that has been established in part (i).
\end{proof}


\section{Existing results} \label{sec:existing}

\setcounter{equation}{0}\setcounter{theorem}{0}

In connection with the task that we have set, some results have already been derived.
We shall recollect them here in order to demonstrate why it is necessary to find
more accurate bounds. To this end, it is useful to introduce the notation
\vskip -13pt
\begin{equation}
	\label{eq:def-lstar}
	L^*(t) := \max_{\tau \in [0, t]} L(\tau)
\end{equation}
\vskip -2pt \noindent
for $t \in [0, T]$, with $L$ being the function from the Lipschitz condition \eqref{eqn.Lip}.
Using this terminology, we can state the following estimates that are, to the best of our
knowledge, the best currently known bounds for the difference
$|x_1(t) - x_2(t)|$ under the assumptions of Theorem~\ref{thm:fundamental}.


\begin{theorem} \label{thm:lower}
	Under the assumptions of Theorem \ref{thm:fundamental}, the solutions $x_1$ and $x_2$
	of the two initial value problems \eqref{eq:ivp1} and \eqref{eq:ivp2}, respectively, satisfy
	the inequality
\vskip -12pt
	\[
		|x_1(t) - x_2(t)| \ge | x_{10} - x_{20} | \cdot E_\alpha(-L^*(t) t^\alpha)
	\]
\vskip -3pt \noindent
	for all $t \in J$.
\end{theorem}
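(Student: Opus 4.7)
The plan is to reduce the claim to a one-sided Volterra integral inequality for the difference $z(t) := x_2(t) - x_1(t)$ and then use a minimum-argument comparison principle to dominate $z$ from below by the Mittag-Leffler function.

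First I would assume without loss of generality that $x_{10} < x_{20}$, so that Theorem~\ref{thm:fundamental}(iii) yields $z(t) > 0$ on all of $J$. Rewriting \eqref{eq:ivp1} and \eqref{eq:ivp2} in their equivalent Volterra form and subtracting gives
\[
   z(t) = (x_{20}-x_{10}) + \frac{1}{\Gamma(\alpha)}\int_0^t (t-s)^{\alpha-1}\bigl[f(s,x_2(s))-f(s,x_1(s))\bigr]\,ds.
\]
The Lipschitz condition \eqref{eqn.Lip} then provides the \emph{lower} bound $f(s,x_2(s))-f(s,x_1(s)) \ge -L(s)z(s)$. Fixing any $t^* \in J$ and using that $L(s) \le L^*(t^*) =: K$ for every $s \in [0,t^*]$, I obtain
\[
   z(t) \ge (x_{20}-x_{10}) - \frac{K}{\Gamma(\alpha)} \int_0^t (t-s)^{\alpha-1} z(s)\,ds \qquad (t \in [0,t^*]).
\]

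Next I introduce the comparison function $u(t) := (x_{20}-x_{10})\,E_\alpha(-Kt^\alpha)$, which solves ${}^{C\!}D_{0+}^\alpha u = -Ku$ with $u(0)=x_{20}-x_{10}$ and therefore satisfies the corresponding integral identity $u(t)=(x_{20}-x_{10}) - K\,(I_{0+}^\alpha u)(t)$ with equality. Setting $v := z - u$ and subtracting the two relations yields $v(0)=0$ together with
\[
   v(t) \ge -\frac{K}{\Gamma(\alpha)} \int_0^t (t-s)^{\alpha-1} v(s)\,ds \qquad (t \in [0,t^*]).
\]
It then suffices to conclude $v \ge 0$ on $[0,t^*]$; evaluating at $t=t^*$ and letting $t^*$ vary over $J$ gives the claim.

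The crux is the last implication, which I would prove by contradiction: assume $v$ attains a negative minimum $m < 0$ at some point $t^{**} \in [0,t^*]$. Inserting the bound $v(s) \ge m$ into the above integral inequality produces
\[
   m = v(t^{**}) \ge -\frac{Km}{\Gamma(\alpha)} \int_0^{t^{**}}(t^{**}-s)^{\alpha-1}\,ds = -\frac{Km\,(t^{**})^\alpha}{\Gamma(\alpha+1)},
\]
i.e.\ $m\bigl(1 + K(t^{**})^\alpha/\Gamma(\alpha+1)\bigr) \ge 0$, contradicting $m<0$. This minimum-principle step is the only real obstacle; the rest is bookkeeping. The argument also transparently uses the monotonicity of $t \mapsto L^*(t)$, which is what allows $L(s)$ to be replaced by the constant $K$ on $[0,t^*]$ without spoiling the sign of the integrand.
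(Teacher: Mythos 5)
Your reduction to the integral inequality $v(t) \ge -K(I_{0+}^\alpha v)(t)$ with $v(0)=0$ is fine as far as it goes, but the concluding minimum-principle step is wrong, and the implication it is meant to establish is actually false. Concretely: since the prefactor $-K$ is nonpositive, inserting $v(s)\ge m$ into $-\frac{K}{\Gamma(\alpha)}\int_0^{t^{**}}(t^{**}-s)^{\alpha-1}v(s)\,ds$ yields an \emph{upper} bound for that quantity, namely $-Km\,(t^{**})^\alpha/\Gamma(\alpha+1)$, not a lower bound; you cannot chain $v(t^{**})\ge A$ with $A\le B$ to conclude $v(t^{**})\ge B$. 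Moreover, no repair is possible at the level of generality to which you have reduced the problem: for $\alpha=1$ and $K=1$, take any continuous $w\ge 0$ with $w(0)=0$, supported in $[0,\varepsilon]$ and with $\int_0^\varepsilon w=1$, and set $v(t)=w(t)-\int_0^t e^{-(t-s)}w(s)\,ds$; then $v(0)=0$ and $v(t)+\int_0^t v(s)\,ds=w(t)\ge 0$, yet $v(t)=-\int_0^\varepsilon e^{-(t-s)}w(s)\,ds<0$ for $t>\varepsilon$. So the one-sided inequality together with $v(0)=0$ does not force $v\ge 0$. This is exactly why the lower bound is the delicate direction (and fails in the vector-valued case, cf.\ Remark~\ref{rmk:multidim}): for the upper bound of Theorem~\ref{thm:upper} the kernel enters with a positive sign and the usual Gronwall iteration goes through, but here it does not.

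The structure you discarded in passing to the inequality is what saves the argument. Keep the exact identity $z=c+I_{0+}^\alpha g$ with $g(s)=f(s,x_2(s))-f(s,x_1(s))$ and write $g=-Kz+\psi$, where $\psi(s)=g(s)+Kz(s)\ge 0$ \emph{pointwise} (this uses both $z>0$ from Theorem~\ref{thm:fundamental}(iii) and $L(s)\le K$ on $[0,t^*]$). Then $z$ solves ${}^{C\!}D_{0+}^{\alpha}z=-Kz+\psi$ with $z(0)=c$, and the variation-of-constants formula (\cite[Lemma 3.1]{CT2017}, the same tool used in the proof of Theorem~\ref{thm.convergence.rate}) gives
\[
	z(t^*)=c\,E_\alpha\big(-K(t^*)^\alpha\big)
	+\int_0^{t^*}(t^*-s)^{\alpha-1}E_{\alpha,\alpha}\big(-K(t^*-s)^\alpha\big)\,\psi(s)\,ds
	\ge c\,E_\alpha\big(-K(t^*)^\alpha\big),
\]
because the resolvent kernel $E_{\alpha,\alpha}$ is positive on the negative real axis. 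This is essentially the route the paper takes for the linear case in Theorem~\ref{thm.convergence.rate} (and, via the quotients $\tilde a_*$, in Theorem~\ref{thm:nonlin-general}); the statement of Theorem~\ref{thm:lower} itself is quoted from \cite[Theorem 4.1]{CT2017}, whose proof follows the same pattern.
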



\begin{theorem} \label{thm:upper}
	Under the assumptions of Theorem \ref{thm:fundamental}, the solutions $x_1$ and $x_2$
	of the two initial value problems \eqref{eq:ivp1} and \eqref{eq:ivp2}, respectively, satisfy
	the inequality
\vskip -12pt
	\[
		|x_1(t) - x_2(t)| \le | x_{10} - x_{20} | \cdot E_\alpha(L^*(t) t^\alpha)
	\]
\vskip -4pt \noindent
	for all $t \in J$.
\end{theorem}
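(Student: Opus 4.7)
The plan is to reduce the Caputo IVPs to their Volterra integral equation reformulation, combine them, and then apply a fractional Gronwall inequality.

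First, I would use the standard equivalence between the initial value problem \eqref{eq:ivp1} (resp.\ \eqref{eq:ivp2}) and the Volterra equation
\[
  x_i(t) = x_{i0} + \frac{1}{\Gamma(\alpha)}\int_0^t (t-\tau)^{\alpha-1} f(\tau,x_i(\tau))\,d\tau, \qquad i=1,2,
\]
which is justified by \cite[Lemma 6.2]{Di2010} together with the existence/uniqueness result already invoked in Theorem \ref{thm:fundamental}(i). Subtracting the two integral equations and applying the triangle inequality together with the Lipschitz hypothesis \eqref{eqn.Lip} yields
\[
  |x_1(t)-x_2(t)| \le |x_{10}-x_{20}| + \frac{1}{\Gamma(\alpha)}\int_0^t (t-\tau)^{\alpha-1} L(\tau)\,|x_1(\tau)-x_2(\tau)|\,d\tau.
\]

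The second step is to replace the variable coefficient $L(\tau)$ by a constant on the interval of integration. Fix $t\in J$. Since $L^*$ is monotone nondecreasing by its definition \eqref{eq:def-lstar}, we have $L(\tau)\le L^*(\tau)\le L^*(t)$ for every $\tau\in[0,t]$. Consequently,
\[
  |x_1(t)-x_2(t)| \le |x_{10}-x_{20}| + \frac{L^*(t)}{\Gamma(\alpha)}\int_0^t (t-\tau)^{\alpha-1}\,|x_1(\tau)-x_2(\tau)|\,d\tau.
\]
This is a fractional Gronwall inequality with constant coefficient $L^*(t)$, and I would close the argument by invoking the classical Henry--Gronwall lemma for weakly singular kernels (see, e.g., \cite[Lemma 6.19]{Di2010}), whose conclusion with constant coefficient $L^*(t)$ and constant forcing term $|x_{10}-x_{20}|$ is exactly
\[
  |x_1(t)-x_2(t)| \le |x_{10}-x_{20}|\,E_\alpha(L^*(t)\,t^\alpha).
\]

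The main technical point to watch is that $L^*(t)$ plays a dual role: it is the quantity appearing in the final bound at the endpoint $t$, yet in the Gronwall step it must be treated as a \emph{constant} coefficient inside the integral over $[0,t]$. The monotonicity of $L^*$ is what makes this substitution legitimate: applying the Gronwall lemma on the fixed interval $[0,t]$ with the constant $L^*(t)$ is valid, and the resulting estimate at the endpoint is the desired one. Since $t\in J$ was arbitrary, the bound holds throughout $J$. No separate argument for the case $J=[0,\infty)$ is needed because each $t$ is handled individually.
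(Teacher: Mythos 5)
Your argument is correct. Note, however, that the paper does not prove this theorem at all: it is quoted as a known result and attributed to \cite[Theorem 4.3]{CT2017}, with ``slightly weaker forms'' credited to \cite[Theorem 6.20]{Di2010} and \cite[Theorem 4.10]{Ti2012}. Your route --- rewrite both IVPs as Volterra equations, subtract, apply the Lipschitz condition, freeze the coefficient at $L^*(t)$ on the fixed interval $[0,t]$, and invoke the Henry--Gronwall lemma for weakly singular kernels --- is essentially the proof of those weaker versions, sharpened in exactly the right way: the localization to $[0,t]$ is what upgrades a global Lipschitz constant to $L^*(t)$, and you correctly flag the one subtle point, namely that the Gronwall hypothesis must hold for \emph{all} $s\in[0,t]$ with the single constant $L^*(t)$ (which it does, since $L(\tau)\le L^*(t)$ for $\tau\le s\le t$). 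By contrast, the technique the paper itself favors for its analogous new result (Theorem \ref{thm.divergence.rate}) avoids Gronwall: it writes the equation for the difference as a constant-coefficient equation plus a perturbation of known sign and uses the Mittag-Leffler variation-of-constants formula with kernel $E_{\alpha,\alpha}$, discarding the nonpositive integral term. Your approach is more elementary and extends verbatim to the vector-valued case (consistent with Remark \ref{rmk:multidim}); the paper's sign-based approach is what permits the finer bounds in terms of $a^*$ and $a_*$ rather than $L^*$. Either way, your proof establishes the stated inequality.
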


Theorem \ref{thm:lower} is given in \cite[Theorem 4.1]{CT2017}.
Theorem \ref{thm:upper} has been shown in \cite[Theorem 4.3]{CT2017}; slightly weaker forms
can be found in \cite[Theorem 6.20]{Di2010} or \cite[Theorem 4.10]{Ti2012}.


\begin{remark} 
	\label{rmk:multidim}
	It should be noted that, as pointed out in \cite[\S 6]{CT2017}, there is a significant
	difference between Theorems \ref{thm:lower} and \ref{thm:upper} in the sense that
	Theorem~\ref{thm:upper} also holds in the vector valued case, i.e.\ in the case where
	$f: J \times \R^d \to \R^d$ with some $d > 1$, whereas Theorem \ref{thm:lower}
	only holds in the scalar setting.
\end{remark}

To demonstrate the shortcomings of the estimates provided by Theorems
\ref{thm:lower} and \ref{thm:upper}, it suffices to look at the very simple example
of the homogeneous linear differential equation with constant coefficients
\[
	{}^{C\!} D_{0+}^\alpha x(t) = \lambda x(t)
\]
with some real constant $\lambda$, i.e.\ at the case $f(t, x) = \lambda x$. Clearly,
we may choose $J = [0, \infty)$ here. In this
case we can observe the following facts about the initial value problems considered
in the theorems:
\begin{enumerate}
\item The function $L$ is simply given by $L(t) = |\lambda|$; thus, $L^*(t) = |\lambda|$ too.
\item The exact solutions to the initial value problems have the form
	$x_k(t) = x_{k0} E_\alpha (\lambda t^\alpha)$ ($k = 1, 2$).
	Hence,
	\begin{align*}
		|x_1(t) - x_2(t)| & = |x_{10} - x_{20}| \cdot E_\alpha (\lambda t^\alpha)  \\
		&= |x_{10} - x_{20}| \times
			\begin{cases}
				1 & \mbox{ if } \lambda = 0, \\
				E_\alpha(L^*(t) t^\alpha) & \mbox{ if } \lambda > 0, \\
				E_\alpha(-L^*(t) t^\alpha) & \mbox{ if } \lambda < 0. \\					
			\end{cases}
	\end{align*}
\item If $\lambda = 0$ then the upper bound from Theorem \ref{thm:upper}
	coincides with the lower bound from Theorem \ref{thm:lower}, and hence both estimates are sharp.
\item If $\lambda < 0$, the estimate of Theorem \ref{thm:lower} is sharp but, in view of
	 Lemma~\ref{lemma:ml-asymp}, Theorem \ref{thm:upper}
	massively overestimates the difference for large $t$.
\item If $\lambda > 0$, the estimate of Theorem \ref{thm:upper} is sharp but, in view of
	Lemma~\ref{lemma:ml-asymp}, Theorem \ref{thm:lower}
	massively underestimates the difference for large $t$.
\end{enumerate}

This means that we always have an upper bound and a lower bound for $|x_1(t) - x_2(t)|$,
but in all cases except for the trivial case $\lambda = 0$, at least one of these bounds is likely
to be far away from the correct value. Based on this fact, our goal now is to improve those bounds in the sense
that we want to obtain a narrower inclusion, i.e.\ an upper bound and a lower bound that are closer together.
Section \ref{sec:example} below will contain a concrete example that demonstrates a case where the inclusion based
on our new estimates is much tighter than the one based on Theorems \ref{thm:lower} and \ref{thm:upper}.


\section{New and tighter bounds for the difference between solutions} \label{sec:newresults}

\setcounter{equation}{0}\setcounter{theorem}{0}

\subsection{Linear differential equations} 
\label{subs:lin}

We begin our analysis with a look at the special case that the differential
equation under consideration is linear. Much as in \cite{CT2017}, the results for this
special case will later allow us to discuss the general case in Subsection \ref{subs:nonlin}.

Therefore, first consider the equation \eqref{1dimEq} under the assumption that $f(t,x)=a(t)x$ for any
$t\in J$ and $x\in \R$, where $a:J \rightarrow \R$ is continuous. First we formulate
and prove a lower bound for the distance between two solutions.


\begin{theorem}[Convergence rate for solutions of 1-dimensional FDEs]
	\label{thm.convergence.rate}
	Under the conditions of Theorem \ref{thm:fundamental} and the assumption
	\[
		f(t,x)=a(t)x
	\]
	with a continuous function $a : J \to \R$,
	for any $t\in J$ the estimate
	\[
		|x_2(t) - x_1(t)| \geq |x_2(0) - x_1(0)| \cdot E_\alpha\big(a_*(t) t^\alpha\big)
	\]
	holds, where
\vskip-16pt
	\[
		a_*(t) := \min_{\tau\in [0,t]} a(\tau).
	\]
\end{theorem}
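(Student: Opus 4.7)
The plan is to work with the difference $z(t) := x_2(t) - x_1(t)$. By linearity of $f(t,x)=a(t)x$, this function satisfies the scalar Caputo initial value problem
\[
	{}^{C\!}D_{0+}^\alpha z(t) = a(t)\, z(t), \qquad z(0) = x_2(0) - x_1(0) \neq 0.
\]
Theorem~\ref{thm:fundamental}(ii)--(iii) guarantees that $z(t)$ never vanishes on $J$ and keeps a constant sign throughout. Since $(-x_1,-x_2)$ solves the same linear homogeneous equation, I may replace the pair by its negative if necessary and thereby assume without loss of generality that $z(t)>0$ on all of $J$. The claim then reduces to proving $z(t) \geq z(0)\, E_\alpha\!\big(a_*(t)\,t^\alpha\big)$ for every $t \in J$.

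Next I would freeze the coefficient. Fix an arbitrary $t^* \in J$ and set $\underline{a} := a_*(t^*)$. Define the comparison function
\[
	v(t) := z(0)\, E_\alpha(\underline{a}\, t^\alpha),
\]
which is the unique solution of ${}^{C\!}D_{0+}^\alpha v = \underline{a}\, v$ with $v(0) = z(0)$. A direct computation shows that $u := z - v$ satisfies $u(0)=0$ together with
\[
	{}^{C\!}D_{0+}^\alpha u(t) = \underline{a}\, u(t) + h(t), \qquad h(t) := \bigl(a(t) - \underline{a}\bigr) z(t).
\]
By the definition of $\underline{a}$ and the positivity of $z$, the forcing $h$ is nonnegative on $[0,t^*]$. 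If I can show $u(t^*) \geq 0$, then $z(t^*) \geq v(t^*)$, and the theorem follows by the arbitrariness of $t^*$.

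To conclude $u(t^*) \ge 0$, the plan is to invoke the variation-of-constants formula for the scalar linear Caputo equation, which represents $u$ explicitly as
\[
	u(t) = \int_0^t (t-\tau)^{\alpha-1}\, E_{\alpha,\alpha}\!\bigl(\underline{a}(t-\tau)^\alpha\bigr)\, h(\tau)\, d\tau.
\]
The sign of $u$ is then governed by the kernel $s \mapsto s^{\alpha-1} E_{\alpha,\alpha}(\underline{a}\, s^\alpha)$: this is trivially positive when $\underline{a} \geq 0$, and it is also positive when $\underline{a} < 0$ by the well-known complete monotonicity of $E_{\alpha,\alpha}(-\,\cdot)$ for $\alpha \in (0,1]$. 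Combined with $h \geq 0$, this yields $u(t^*) \geq 0$.

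The step I expect to be most delicate is the case $\underline{a} < 0$. A naive iteration of the Volterra inequality for $u$ would produce an alternating series and does not directly give positivity; relying on the variation-of-constants representation together with the complete monotonicity of the Mittag-Leffler kernel sidesteps this. Equivalently, one could prove a fractional positivity/comparison principle for linear Caputo equations with a (possibly negative) constant coefficient and a nonnegative forcing directly, which is really the same fact in different packaging. Once this technical input is accepted, the remainder of the argument is just the standard ``freeze-the-coefficient'' comparison, adapted to the Caputo setting.
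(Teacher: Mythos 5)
Your proposal is correct and follows essentially the same route as the paper's own proof: freeze the coefficient at $a_*(t)$, form the difference with the frozen-coefficient Mittag-Leffler solution, and conclude nonnegativity from the variation-of-constants representation with a nonnegative forcing term. The only difference is that you explicitly justify the positivity of the kernel $s^{\alpha-1}E_{\alpha,\alpha}(\underline{a}\,s^\alpha)$ for $\underline{a}<0$ via complete monotonicity, a detail the paper leaves implicit in its citation of \cite[Lemma 3.1]{CT2017}.
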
 


\begin{proof}
	For definiteness we assume $x_2(0) > x_1(0)$. Let $u(t):=x_2(t)-x_1(t)$ for $t\in J$.
	Then by Theorem \ref{thm:fundamental}(iii), we have $u(t) > 0$ for any $t \in J$.
	On the other hand, $u(\cdot)$ is the unique solution to the system
\vskip -10pt
	\begin{subequations}
	\begin{align}
		\label{eq_1} ^{C\!}D_{0+}^{\alpha}u(t)&=a(t)u(t),\;t\in J\setminus\{0\},\\
		\label{eq_2} u(0)&=x_2(0)-x_1(0).
	\end{align}
	\end{subequations}
	For an arbitrary but fixed $t> 0$, we consider the problem
\vskip -10pt
	\begin{subequations}
	\begin{align}
		\label{eq_3} ^{C\!}D_{0+}^{\alpha}v(s) & = a_*(t) v(s),   \;  s \in (0,t],\\
		\label{eq_4} v(0) & = x_2(0) - x_1(0).
	\end{align}
	\end{subequations}
	From \cite[Lemma 3.1]{CT2017} or \cite[Theorem 7.2]{Di2010}, we deduce that this problem
	has the unique solution
	$v(s) = |x_2(0) - x_1(0)| \cdot E_\alpha(a_*(t) s^\alpha)$, $s \in [0,t]$.
	Define $h(s) := u(s) - v(s)$, $s\in [0,t]$. It is easy to see that $h$ is the unique solution of the system
\vskip -12pt
	\begin{subequations}
	\begin{align}
		\label{eq_5} ^{C\!}D_{0+}^{\alpha}h(s)&=a_*(t)h(s)+[a(s)-a_*(t)]u(s),\;t\in (0,t],\\
		\label{eq_6} h(0)&=0.
	\end{align}
	\end{subequations}
\vskip -3pt \noindent
	Notice that for $s\in [0,t]$
\vskip -12pt
	\[
		h(s)=\int_0^s (s-\tau)^{\alpha-1}E_{\alpha,\alpha}(a_*(t)(s-\tau)^\alpha)[a(\tau)-a_*(t)]u(\tau)d\tau,
	\]
\vskip -3pt \noindent
	see also \cite[Lemma 3.1]{CT2017}. Furthermore, $[a(s)-a_*(t)]u(s)\geq 0$ for all $s\in [0,t]$.
	Thus, $h(s)\geq 0$ for all $s\in [0,t]$. In particular, $h(t)\geq 0$ or
	$u(t)\geq v(t) = |x_2(0)-x_1(0)| \cdot E_\alpha(a_*(t)t^\alpha)$. The proof is complete.
\end{proof}  

For the divergence rate and upper bounds for solutions, the following statement is
an easy modification of the well known result.


\begin{theorem}[Divergence rate for solutions of 1-dimensional FDEs]
	\label{thm.divergence.rate}
	Under the assumptions of Theorem \ref{thm.convergence.rate},
	for any $t \in J$ the estimate
\vskip -10pt
	\[
		|x_2(t)-x_1(t)| \leq |x_2(0) - x_1(0)| \cdot E_\alpha(a^*(t) t^\alpha)
	\]
\vskip -3pt \noindent
	holds, where
\vskip -16pt
	\[
		a^*(t)=\max_{s\in [0,t]}a(s).
	\]
\end{theorem}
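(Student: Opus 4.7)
The plan is to mirror the argument of Theorem \ref{thm.convergence.rate} almost verbatim, replacing $a_*(t)$ by $a^*(t)$ and reversing the inequality at the decisive step. Without loss of generality I would assume $x_2(0) > x_1(0)$ and set $u(s) := x_2(s) - x_1(s)$, which is strictly positive on $J$ by Theorem~\ref{thm:fundamental}(iii) and satisfies the linear Caputo equation ${}^{C\!}D_{0+}^\alpha u(s) = a(s) u(s)$ with $u(0) = x_2(0) - x_1(0)$.

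Next, for an arbitrary but fixed $t > 0$, I would introduce the comparison problem
\[
    {}^{C\!}D_{0+}^\alpha v(s) = a^*(t)\, v(s), \quad s \in (0,t], \qquad v(0) = x_2(0) - x_1(0),
\]
whose unique solution is $v(s) = |x_2(0) - x_1(0)| \cdot E_\alpha(a^*(t) s^\alpha)$ by \cite[Theorem 7.2]{Di2010}. Setting $h(s) := v(s) - u(s)$ (note the opposite sign convention compared to the previous proof), one readily verifies that $h$ solves
\[
    {}^{C\!}D_{0+}^\alpha h(s) = a^*(t) h(s) + [a^*(t) - a(s)] u(s), \qquad h(0) = 0.
\]

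By the variation-of-constants formula for linear Caputo equations (as used in the proof of Theorem \ref{thm.convergence.rate}), this yields the explicit representation
\[
    h(s) = \int_0^s (s-\tau)^{\alpha-1} E_{\alpha,\alpha}\bigl(a^*(t)(s-\tau)^\alpha\bigr) \bigl[a^*(t) - a(\tau)\bigr] u(\tau)\, d\tau.
\]
Since $a^*(t) - a(\tau) \geq 0$ for all $\tau \in [0,t]$ by definition of $a^*(t)$, and since both $u(\tau) > 0$ and the Mittag--Leffler kernel $E_{\alpha,\alpha}$ is positive on the nonnegative real axis, the integrand is nonnegative throughout. Consequently $h(s) \geq 0$ on $[0,t]$; evaluating at $s = t$ gives $u(t) \leq v(t) = |x_2(0) - x_1(0)| \cdot E_\alpha(a^*(t) t^\alpha)$, which is the desired bound.

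There is essentially no serious obstacle here: the construction is a direct symmetric counterpart of the previous proof, and the only step requiring mild care is checking the sign of $[a^*(t) - a(s)]$ and of the Mittag--Leffler kernel inside the integral. The argument uses in a crucial way the positivity of $u$ on $J$ (hence Theorem~\ref{thm:fundamental}(iii)), which is why this scalar technique does not extend to higher dimensions, in line with Remark~\ref{rmk:multidim}.
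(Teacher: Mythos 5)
Your proposal is correct and follows essentially the same route as the paper: the same decomposition $a(s)u = a^*(t)u + [a(s)-a^*(t)]u$, the same variation-of-constants representation from \cite[Lemma 3.1]{CT2017}, and the same sign argument; the paper merely writes the representation for $u$ directly instead of introducing $h = v - u$. The only minor imprecision is your claim that $E_{\alpha,\alpha}$ is positive ``on the nonnegative real axis'' --- when $a^*(t)<0$ the kernel is evaluated at negative arguments, where positivity still holds for $0<\alpha\le 1$ (by complete monotonicity), which is what the argument actually needs.
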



\begin{proof}
	For definiteness we once again assume $x_2(0) > x_1(0)$ and let $u(t):=x_2(t)-x_1(t)$ for $t\in J$.
	As shown above, $u(t)>0$ for any $t\in J$, and $u(\cdot)$ is the unique solution to the system
	given by eqs.\ \eqref{eq_1} and \eqref{eq_2}.
	For an arbitrary but fixed $t> 0$, this system on the interval $[0,t]$ is rewritten as
\vskip -16pt
	\begin{subequations}
	\begin{align}
		\label{eq_9} ^{C\!}D_{0+}^{\alpha}u(s)&=a^*(t)u(s)+[a(s)-a^*(t)]u(s),\;s\in (0,t],\\
		\label{eq_10} u(0)&=x_2(0)-x_1(0).
	\end{align}
	\end{subequations}
	Thus, due to \cite[Lemma 3.1]{CT2017} we obtain
	\begin{align*}
		u(s)&=|x_2(0)-x_1(0)| \cdot E_\alpha(a^*(t)s^\alpha)\\
		&\hspace{2cm}
			+\int_0^s (s-\tau)^{\alpha-1}E_{\alpha,\alpha}(a^*(t)(s-\tau)^\alpha)[a(\tau)-a^*(t)]u(\tau)d\tau
	\end{align*}
	for $s\in [0,t]$ which together with $[a(s)-a^*(t)]u(s)\leq 0$ for all $s\in [0,t]$ implies that
\vskip -14pt
	\[
		u(s)\leq |x_2(0)-x_1(0)|E_\alpha(a^*(t)s^\alpha)
	\]
	for $s\in [0,t]$. In particular, $u(t)\leq |x_2(0)-x_1(0)| \cdot E_\alpha(a^*(t)t^\alpha)$. The theorem is proved.
\end{proof} 

As an immediate consequence of Theorem \ref{thm.divergence.rate}, we obtain a
stability result for homogeneous linear equations with non-constant coefficients:


\begin{corollary} 	\label{cor:stability-lin}
	Assume the hypotheses of Theorem \ref{thm.convergence.rate} and let $J = [0, \infty)$.
	If $\sup_{t \ge 0} a(t) < 0$ then all solutions $x$ to the equation \eqref{1dimEq} satisfy the
	property $\lim_{t \to \infty} x(t) = 0$. In other words, the differential equation is asymptotically stable.
\end{corollary}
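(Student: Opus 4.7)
The plan is to apply Theorem \ref{thm.divergence.rate} with one of the two solutions chosen to be the zero solution, then exploit the hypothesis $\sup_{t \ge 0} a(t) < 0$ together with the asymptotic behaviour of the Mittag-Leffler function provided by Lemma \ref{lemma:ml-asymp}.

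First I would note that since $f(t,0) = a(t) \cdot 0 = 0$ for all $t$, the function $x_1 \equiv 0$ is a solution to \eqref{1dimEq} with initial value $x_1(0) = 0$. For an arbitrary solution $x$ to \eqref{1dimEq} (with initial value $x(0) = x_0$, say), applying Theorem \ref{thm.divergence.rate} to the pair $(x_1, x_2) = (0, x)$ yields
\[
    |x(t)| \le |x_0| \cdot E_\alpha\big(a^*(t) t^\alpha\big)
    \qquad \text{for all } t \in J,
\]
where $a^*(t) = \max_{s \in [0,t]} a(s)$.

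Next, set $c := -\sup_{t \ge 0} a(t)$, which by hypothesis is strictly positive. Then $a^*(t) \le -c < 0$ for every $t \ge 0$. Since the Mittag-Leffler function $E_\alpha$ with $\alpha \in (0,1]$ is monotonically non-decreasing on $\mathbb{R}$ (it is, in fact, completely monotone on the negative half-line, a classical property of $E_\alpha$ for such $\alpha$), we may deduce
\[
    E_\alpha\big(a^*(t) t^\alpha\big) \le E_\alpha\big(-c t^\alpha\big)
    \qquad \text{for all } t > 0,
\]
and therefore $|x(t)| \le |x_0| \cdot E_\alpha(-c t^\alpha)$.

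Finally, I would invoke Lemma \ref{lemma:ml-asymp} with $\lambda = -c < 0$: as $t \to \infty$,
\[
    E_\alpha(-c t^\alpha) = \frac{c \, t^{-\alpha}}{\Gamma(1-\alpha)} + O(t^{-2\alpha}) \longrightarrow 0 ,
\]
which forces $|x(t)| \to 0$, as desired. The only non-routine point is the monotonicity of $E_\alpha$ on $(-\infty, 0]$, which is where I expect a referee might want a citation; beyond that, the argument is a straightforward assembly of Theorem \ref{thm.divergence.rate} and Lemma \ref{lemma:ml-asymp}.
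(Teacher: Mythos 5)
Your argument is correct and follows essentially the same route as the paper's own proof: take the zero solution as the comparison solution, apply Theorem \ref{thm.divergence.rate}, bound $a^*(t)$ by its supremum (your $-c$ is the paper's $A^*$), use the monotonicity of $E_\alpha$ (for which the paper cites Proposition 3.10 of the Gorenflo--Kilbas--Mainardi--Rogosin monograph), and conclude via Lemma \ref{lemma:ml-asymp}. No substantive differences.
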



\begin{proof}
	As the differential equation under consideration is linear and homogeneous,
	it is clear that $\tilde x \equiv 0$ is one of its solutions. Moreover, we note that
\vskip-14pt
	\[
		A^* := \sup_{t \ge 0} a^*(t) = \sup_{t \ge 0} a(t).
	\]
\vskip -2pt \noindent
	Thus,	if $x$ is any solution to the differential equation, it
	follows from Theorem \ref{thm.divergence.rate} that
\vskip -12pt
	\begin{align*}
		|x(t)| &= |x(t) - 0| \le | x(0) - 0| \cdot E_\alpha(a^*(t) t^\alpha)
		=  | x(0)| \cdot E_\alpha(a^*(t) t^\alpha) \\
		& \le  | x(0)| \cdot E_\alpha(A^* t^\alpha)
	\end{align*}
\vskip -2pt \noindent
	for all $t \ge 0$ where in the last inequality we have used the well known monotonicity
	of the Mittag-Leffler function $E_\alpha$ \cite[Proposition 3.10]{GKMR2020}.
	Since $A^* < 0$ by assumption, Lemma \ref{lemma:ml-asymp} implies that the
	upper bound tends to $0$ for $t \to \infty$, and our claim follows.
\end{proof} 

\begin{remark}
	Using the arguments developed in \cite{CT2017}, we can see that the
	observations of Remark \ref{rmk:multidim} hold here as well:
	Theorem \ref{thm.divergence.rate} (and hence also Corollary \ref{cor:stability-lin})
	can be generalized to the multidimensional
	setting but Theorem \ref{thm.convergence.rate} cannot.
\end{remark}


\begin{remark}
	The question addressed in Corollary \ref{cor:stability-lin} is closely related to the topic
	discussed (with completely different methods) in \cite{CST2014}.
\end{remark}

\subsection{Nonlinear differential equations} 
\label{subs:nonlin}

Now consider equation \eqref{1dimEq} with $f$ assumed to be continuous on $J\times \R$ and
to satisfy the condition \eqref{eqn.Lip}, so we are in the situation discussed in Theorem
\ref{thm:fundamental}. Further, we assume temporarily that $f(t,0) = 0$ for any $t\in J$.
For each $t \in J$, we define
\vskip -12pt
\begin{equation}
	\label{eq:def-astar}
	a_*(t) := \inf_{s\in [0,t],\;x\in \R\setminus\{0\}}\frac{f(s,x)}{x}
	\quad \text{ and } \quad
	a^*(t) := \sup_{s\in [0,t], \; x \in \R \setminus \{ 0 \}} \frac{f(s,x)}{x}.
\end{equation}
Note that, if the differential equation is linear, i.e.\ if $f(t,x) = a(t) x$, then these
definitions of $a_*$ and $a^*$ coincide with the conventions introduced in Theorems
\ref{thm.convergence.rate} and \ref{thm.divergence.rate}, respectively.

\vskip 3pt

We first state an auxiliary result which asserts that this definition makes sense because the
infimum and the supremum mentioned in \eqref{eq:def-astar} exist.


\begin{lemma}
	\label{lemma:astar}
	Let $f$ satisfy the assumptions mentioned in Theorem \ref{thm:fundamental},
	and assume furthermore that $f(t, 0) = 0$ for all $t \in J$.
	Then, the definitions of the functions $a_*$ and $a^*$ given in \eqref{eq:def-astar}
	are meaningful for all $t \in J$, and the functions $a_*$ and $a^*$ are bounded
	on this interval.
\end{lemma}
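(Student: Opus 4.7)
The plan is to exploit the Lipschitz condition \eqref{eqn.Lip} together with the normalization $f(t,0) = 0$ to produce a uniform bound on $|f(s,x)/x|$ in terms of $L(s)$, and then to invoke continuity of $L$ on compact subintervals.

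First I would observe that for every $s \in J$ and every $x \in \R \setminus \{0\}$, applying \eqref{eqn.Lip} with $y = 0$ and using the assumption $f(s,0) = 0$ gives
\[
    |f(s,x)| \;=\; |f(s,x) - f(s,0)| \;\le\; L(s)\,|x - 0| \;=\; L(s)\,|x|.
\]
Dividing both sides by $|x| > 0$ yields the pointwise estimate
\[
    \left| \frac{f(s,x)}{x} \right| \;\le\; L(s).
\]

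Next, I would fix $t \in J$ and combine this with the definition \eqref{eq:def-lstar} of $L^*$. For every $s \in [0,t]$ and every $x \in \R \setminus \{0\}$ one has $|f(s,x)/x| \le L(s) \le L^*(t)$, so the set
\[
    \left\{ \frac{f(s,x)}{x} \;:\; s \in [0,t],\; x \in \R \setminus \{0\} \right\}
\]
is contained in the bounded interval $[-L^*(t), L^*(t)] \subset \R$. Being a nonempty bounded subset of $\R$, it admits a finite infimum and a finite supremum, which are precisely $a_*(t)$ and $a^*(t)$. This establishes that both quantities are meaningful (i.e., finite real numbers) for every $t \in J$ and that $|a_*(t)|, |a^*(t)| \le L^*(t)$.

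Finally, for the boundedness claim, the continuity of $L$ on $J$ implies that $L$ — and hence also the monotone envelope $L^*$ — is bounded on every compact subinterval of $J$; in particular, if $J = [0,T]$, then $L^*(T) = \max_{\tau \in [0,T]} L(\tau)$ is a finite constant that bounds both $|a_*(t)|$ and $|a^*(t)|$ uniformly over $t \in J$. I do not anticipate any real obstacle here: the entire argument is a one-line consequence of the Lipschitz hypothesis together with the vanishing of $f$ at $x=0$, and the only thing to be careful about is to record explicitly that the bound $|f(s,x)/x| \le L(s)$ is uniform in $x$, so that passing to the infimum and supremum over $x \in \R \setminus \{0\}$ preserves finiteness.
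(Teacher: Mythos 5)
Your proposal is correct and follows essentially the same route as the paper's own proof: both derive $|f(s,x)/x|\le L(s)$ from the Lipschitz condition with $y=0$ and $f(s,0)=0$, and then bound $a_*(t)$ and $a^*(t)$ by $\pm\max_{s\in[0,t]}L(s)$. Nothing is missing.
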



\begin{proof}
	By definition, we obtain---in view of the property $f(t, 0) = 0$ and the Lipschitz
	condition \eqref{eqn.Lip}---the estimate
\vskip-8pt
	\[
	-L(t)\leq \frac{f(t,x)}{x}\leq L(t)
	\]
	for any $x\in \R\setminus\{0\}$ and $t\in J$. Thus, for any given time $t\in J$,
\vskip -12pt
	\[
		-\max_{s\in [0,t]}L(s) =\inf_{s\in [0,t]} \left( -L(s) \right) \leq \frac{f(s,x)}{x}
		\quad \forall s\in [0,t],\;x\neq 0.
	\]
	This implies that
\vskip -14pt
	\begin{equation}\label{s1}
	-\max_{s\in [0,t]}L(s)\leq \inf_{s\in [0,t],\;x\neq 0}\frac{f(s,x)}{x}=a_*(t).
	\end{equation}
	On the other hand, we also see that
\vskip -10pt
	\[
	a^*(t)\leq \max_{s\in [0,t]}L(s)
	\]
	for any $t\in J$. This together with \eqref{s1} implies that
	\[
	-\max_{s\in [0,t]}L(s)\leq a_*(t)\leq a^*(t)\leq \max_{s\in [0,t]}L(s)
	\]
	for any $t\in J$. The lemma is proved.
\end{proof}  

\begin{theorem}
	\label{thm:nonlin0}
	Under the assumptions of Lemma \ref{lemma:astar},
	we have:
	\begin{itemize}
	\item[(i)] For $x_0>0$, the solution $\varphi(\cdot,x_0)$ of eq.~\eqref{1dimEq}
		with the condition $x(0)=x_0$ satisfies
\vskip -10pt
		\[
			x_0 E_\alpha(a_*(t)t^\alpha )\leq \varphi(t,x_0 )\leq x_ 0E_\alpha(a^*(t)t^\alpha).
		\]
	\item[(ii)] For $x_0<0$, the solution $\varphi(\cdot,x_0)$ of eq.~\eqref{1dimEq}
		with the condition $x(0)=x_0$ satisfies
\vskip -10pt
		\[
			x_0 E_\alpha(a^*(t)t^\alpha) \leq \varphi(t,x_0 )\leq x_0 E_\alpha(a_*(t)t^\alpha).
		\]
	\end{itemize}
\end{theorem}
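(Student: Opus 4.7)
The plan is to reduce the nonlinear problem to the linear situation already handled by Theorems \ref{thm.convergence.rate} and \ref{thm.divergence.rate}. The key observation is that $f(t,0)=0$ forces $\tilde\varphi\equiv 0$ to be a solution of \eqref{1dimEq} with initial value $0$, and hence, by Theorem \ref{thm:fundamental}(iii) applied to the pair $(\varphi(\cdot,x_0),\tilde\varphi)$, the trajectory $\varphi(\cdot,x_0)$ never vanishes on $J$ and has the same sign as $x_0$ throughout. This opens the door to dividing by $\varphi$.

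Concretely, I would introduce the coefficient function
\[
    \tilde a(s) := \frac{f(s,\varphi(s,x_0))}{\varphi(s,x_0)}, \qquad s\in J,
\]
which is well-defined by the preceding paragraph and continuous on $J$ because $f$ and $\varphi(\cdot,x_0)$ are continuous and the denominator stays bounded away from $0$ on each compact subinterval. Then $\varphi(\cdot,x_0)$ solves the linear equation ${}^{C\!}D_{0+}^{\alpha}\varphi(s,x_0) = \tilde a(s)\varphi(s,x_0)$ with initial value $x_0$, and by construction
\[
    a_*(t) \;\le\; \min_{s\in[0,t]} \tilde a(s) \;\le\; \max_{s\in[0,t]} \tilde a(s) \;\le\; a^*(t)
\]
for every $t\in J$, where $a_*$ and $a^*$ are the quantities defined in \eqref{eq:def-astar}.

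With this reduction in hand, I would apply Theorems \ref{thm.convergence.rate} and \ref{thm.divergence.rate} to the linear equation ${}^{C\!}D_{0+}^{\alpha}u = \tilde a(s)u$, comparing the two solutions $x_1\equiv 0$ and $x_2 = \varphi(\cdot,x_0)$. This yields
\[
    |x_0|\cdot E_\alpha\!\Bigl(\min_{s\in[0,t]}\tilde a(s)\,t^\alpha\Bigr)
    \;\le\; |\varphi(t,x_0)| \;\le\;
    |x_0|\cdot E_\alpha\!\Bigl(\max_{s\in[0,t]}\tilde a(s)\,t^\alpha\Bigr).
\]
Using the monotonicity of the Mittag-Leffler function $E_\alpha$ (cited at the end of the proof of Corollary \ref{cor:stability-lin}), the inner arguments can be replaced by $a_*(t)t^\alpha$ and $a^*(t)t^\alpha$ respectively, turning the inequalities into $|x_0|E_\alpha(a_*(t)t^\alpha)\le|\varphi(t,x_0)|\le|x_0|E_\alpha(a^*(t)t^\alpha)$. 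Cases (i) and (ii) then drop out by removing the absolute value signs according to whether $x_0>0$ (so $\varphi(t,x_0)>0$) or $x_0<0$ (so $\varphi(t,x_0)<0$): in the negative case dividing by $-1$ flips the inequality and swaps the roles of the $a_*$ and $a^*$ bounds, exactly as stated.

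The only delicate point is ensuring that the hypotheses of Theorems \ref{thm.convergence.rate} and \ref{thm.divergence.rate} really do apply to the linear equation defined by $\tilde a$; this requires that $\tilde a$ be continuous on $J$ and that the Lipschitz bound $|\tilde a(s)|\le L(s)$ (which follows from \eqref{eqn.Lip} together with $f(s,0)=0$) transfer properly. Both points are immediate from Theorem \ref{thm:fundamental}(i)--(iii), so I do not expect a genuine obstruction, only the need to spell out the sign analysis cleanly when removing absolute values to distinguish (i) from (ii).
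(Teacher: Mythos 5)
Your argument is correct, and its core idea---divide by the never-vanishing solution $\varphi(\cdot,x_0)$ to view it as the solution of the nonautonomous linear equation ${}^{C\!}D_{0+}^{\alpha}u(s)=\tilde a(s)u(s)$ with $\tilde a(s)=f(s,\varphi(s,x_0))/\varphi(s,x_0)$---is exactly the linearization along the trajectory that the paper uses. The packaging differs slightly: the paper does not pass through Theorems \ref{thm.convergence.rate} and \ref{thm.divergence.rate} as black boxes but instead reruns their perturbation argument directly, writing, for fixed $t$,
\[
{}^{C\!}D_{0+}^{\alpha}\varphi(s,x_0)=a_*(t)\varphi(s,x_0)+\bigl(\tilde a(s)-a_*(t)\bigr)\varphi(s,x_0),\qquad s\in[0,t],
\]
with the coefficient frozen at the value $a_*(t)$ from \eqref{eq:def-astar} rather than at $\min_{s\in[0,t]}\tilde a(s)$, and then concluding from the variation-of-constants formula and the sign of the perturbation term. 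Because the frozen coefficient is already $a_*(t)$ (resp.\ $a^*(t)$), the paper never needs the monotonicity of $E_\alpha$, which your route does require in the final relaxation step from $\min_{s\in[0,t]}\tilde a(s)$ to $a_*(t)$; that step is legitimate (and is the same monotonicity fact the paper invokes in Corollary \ref{cor:stability-lin}), and as a small bonus your intermediate bound $E_\alpha\bigl(\min_{s\in[0,t]}\tilde a(s)\,t^\alpha\bigr)$ is in general slightly sharper than the stated one before you relax it. Your sign analysis for deducing (ii) from the absolute-value inequalities is also sound; the paper simply omits it as ``proven similarly.''
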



\begin{proof}
We only show the proof of the statement (i). The case (ii) is proven similarly. Let $x_0>0$.
By Theorem \ref{thm:fundamental}(iii) and the fact that $f(t,0)=0$ for all $t\in J$, the solution
$\varphi(\cdot,x_0)$ is positive on $J$. For an arbitrary but fixed $t> 0$, we have on the interval $[0,t]$
\vskip -11pt
\[
	^{C\!}D_{0+}^{\alpha}\varphi(s,x_0)
		= a_*(t)\varphi(s,x_0)+\Big(-a_*(t)+\frac{f(s,\varphi(s,x_0))}{\varphi(s,x_0)}\Big)\varphi(s,x_0).
\]
\vskip -4pt \noindent
This implies that
\vskip-14pt
\[
	\varphi(s,x_0)\geq x_0 E_\alpha(a_*(t)s^\alpha),\; s\in [0,t].
\]
In particular, $\varphi(t,x_0)\geq x_0 E_\alpha(a_*(t)t^\alpha)$. On the other hand,
$\varphi(\cdot,x_0)$ is also the unique solution of the equation
\vskip -12pt
\[
	^{C\!}D_{0+}^{\alpha}\varphi(s,x_0)
		= a^*(t)\varphi(s,x_0)+\Big(-a^*(t)+\frac{f(s,\varphi(s,x_0))}{\varphi(s,x_0)}\Big)\varphi(s,x_0),\;s\in [0,t].
\]
\vskip -3pt \noindent
Thus,
\vskip -13pt
\[\varphi(s,x_0)\leq x_0 E_\alpha(a^*(t)s^\alpha),\; s\in [0,t]
\]
and $\varphi(t,x_0)\leq x_0 E_\alpha(a^*(t)t^\alpha)$. The proof is complete.
\end{proof} 

Theorem \ref{thm:nonlin0} has an immediate consequence:


\begin{corollary}
	Assume the hypotheses of Theorem \ref{thm:fundamental}, and furthermore let $f(t,0) = 0$
	for all $t \in J$.
	\begin{enumerate}
	\item[(i)] For $0 < x_{10} < x_{20}$, we have for all $t \in J$ that
		\begin{eqnarray*}
			\lefteqn{ x_{20} E_\alpha (a_*(t) t^\alpha) - x_{10} E_\alpha (a^*(t) t^\alpha) } \\
			& \le  & x_2(t) - x_1(t) \\
			& \le & x_{20} E_\alpha (a^*(t) t^\alpha) - x_{10} E_\alpha (a_*(t) t^\alpha).
		\end{eqnarray*}
	\item[(ii)] For $x_{10} < 0 < x_{20}$, we have for all $t \in J$ that
		\[
			(x_{20} - x_{10}) E_\alpha (a_*(t) t^\alpha)
			\le   x_2(t) - x_1(t)
			\le (x_{20} - x_10) E_\alpha (a^*(t) t^\alpha).
		\]
	\item[(ii)] For $x_{10} < x_{20} < 0$, we have for all $t \in J$ that
		\begin{eqnarray*}
			\lefteqn{ x_{20} E_\alpha (a^*(t) t^\alpha) - x_{10} E_\alpha (a_*(t) t^\alpha) } \\
			& \le  & x_2(t) - x_1(t) \\
			& \le & x_{20} E_\alpha (a_*(t) t^\alpha) - x_{10} E_\alpha (a^*(t) t^\alpha).
		\end{eqnarray*}	
	\end{enumerate}
\end{corollary}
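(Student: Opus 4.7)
The plan is to reduce each of the three cases to a direct application of Theorem \ref{thm:nonlin0} followed by elementary arithmetic. The key observation is that under the stated hypotheses, Theorem \ref{thm:fundamental}(iii) combined with $f(t,0)=0$ ensures that each trajectory keeps its sign (since $\tilde x\equiv 0$ is a solution that cannot be crossed), so the appropriate part of Theorem \ref{thm:nonlin0} can be applied to each of $x_1$ and $x_2$ individually. Subtracting the bounds and combining terms will produce the estimates claimed in (i)--(iii).

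For part (i), with $0<x_{10}<x_{20}$, both solutions are positive, so Theorem \ref{thm:nonlin0}(i) yields
\[
    x_{i0}\,E_\alpha(a_*(t)t^\alpha) \;\le\; x_i(t) \;\le\; x_{i0}\,E_\alpha(a^*(t)t^\alpha), \qquad i=1,2.
\]
Taking the upper bound for $x_2$ together with the lower bound for $x_1$ gives the upper bound for $x_2(t)-x_1(t)$; taking the lower bound for $x_2$ together with the upper bound for $x_1$ gives the lower bound. For part (iii), with $x_{10}<x_{20}<0$, both solutions are negative, so one applies Theorem \ref{thm:nonlin0}(ii) to each; the only subtlety is that multiplying the $x_{i0}$ factors (which are negative) by the Mittag-Leffler factors preserves the direction of the inequalities already built into the statement of Theorem \ref{thm:nonlin0}(ii), so subtracting proceeds exactly as in part (i) with the roles of $a_*$ and $a^*$ swapped on the corresponding sides.

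Part (ii), with $x_{10}<0<x_{20}$, is the mildly delicate one: here we apply Theorem \ref{thm:nonlin0}(i) to $x_2$ and Theorem \ref{thm:nonlin0}(ii) to $x_1$. For $x_1$ we have
\[
    x_{10}\,E_\alpha(a^*(t)t^\alpha) \;\le\; x_1(t) \;\le\; x_{10}\,E_\alpha(a_*(t)t^\alpha),
\]
and multiplying by $-1$ flips the inequalities to
\[
    -x_{10}\,E_\alpha(a_*(t)t^\alpha) \;\le\; -x_1(t) \;\le\; -x_{10}\,E_\alpha(a^*(t)t^\alpha).
\]
Adding this to the bound $x_{20}E_\alpha(a_*(t)t^\alpha)\le x_2(t)\le x_{20}E_\alpha(a^*(t)t^\alpha)$ causes the two Mittag-Leffler evaluations on each side to coincide, and since $x_{20}>0>x_{10}$ one can factor $x_{20}-x_{10}$ out of each side, producing the compact form stated in the corollary.

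The only place where one needs to stop and think is the sign bookkeeping in part (ii), where one must verify that after flipping the $x_1$-inequality the two sides pair up with the \emph{same} evaluation of the Mittag-Leffler function (rather than mixing $a_*$ and $a^*$ as in parts (i) and (iii)); this is what allows the right-hand side to collapse to a multiple of the single quantity $x_{20}-x_{10}$. Apart from this, everything reduces to monotonicity of addition and the sign-flipping rule for multiplying an inequality by a negative number, so no new analytic input beyond Theorem \ref{thm:nonlin0} is required.
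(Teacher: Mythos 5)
Your proposal is correct and follows exactly the route the paper intends: the paper states this corollary without proof as an ``immediate consequence'' of Theorem \ref{thm:nonlin0}, and your argument---applying part (i) or (ii) of that theorem to each solution according to the sign of its initial value and then subtracting the bounds with the appropriate pairing---is precisely the omitted verification, with the sign bookkeeping in all three cases carried out correctly.
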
 

From this result, we can also deduce an analog of Corollary \ref{cor:stability-lin},
i.e.\ a sufficient criterion for asymptotic stability, for the nonlinear case.


\begin{corollary}
	Assume the hypotheses of Theorem \ref{thm:fundamental}, and furthermore let $J = [0, \infty)$ and
	$f(t,0) = 0$ for all $t \in J$. Moreover, let $\sup_{t \ge 0} a^*(t) < 0$. Then, all solutions $x$
	of the differential equation \eqref{1dimEq} satisfy $\lim_{t \to \infty} x(t) = 0$.
\end{corollary}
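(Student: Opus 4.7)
The plan is to mimic the proof of Corollary \ref{cor:stability-lin}, using Theorem \ref{thm:nonlin0} in place of Theorem \ref{thm.divergence.rate} to bound the magnitude of an arbitrary solution by a multiple of $E_\alpha(a^*(t) t^\alpha)$, and then exploit the monotonicity of $E_\alpha$ together with Lemma \ref{lemma:ml-asymp} to conclude decay.

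First, since $f(t,0)=0$ for all $t\in J$, the unique solution of \eqref{1dimEq} with initial value $0$ is $\tilde x\equiv 0$ by Theorem \ref{thm:fundamental}(i), so the case $x_0=0$ is trivial. For the nontrivial case, let $x(\cdot)=\varphi(\cdot,x_0)$ with $x_0\neq 0$ and split according to the sign of $x_0$. If $x_0>0$, Theorem \ref{thm:nonlin0}(i) yields $0<\varphi(t,x_0)\le x_0\, E_\alpha(a^*(t)t^\alpha)$, and hence $|x(t)|\le |x_0|\,E_\alpha(a^*(t)t^\alpha)$. If $x_0<0$, Theorem \ref{thm:nonlin0}(ii) gives $x_0\,E_\alpha(a^*(t)t^\alpha)\le \varphi(t,x_0)<0$; multiplying by $-1$ produces $|x(t)|=-\varphi(t,x_0)\le -x_0\,E_\alpha(a^*(t)t^\alpha)=|x_0|\,E_\alpha(a^*(t)t^\alpha)$. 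Either way, the unified estimate $|x(t)|\le|x_0|\,E_\alpha(a^*(t)t^\alpha)$ holds for every $t\ge 0$.

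Next I would set $A^*:=\sup_{t\ge 0}a^*(t)<0$ and invoke the monotonicity of $E_\alpha$ on $\R$ (as in the proof of Corollary \ref{cor:stability-lin}, via \cite[Proposition~3.10]{GKMR2020}) to pass from $a^*(t)\le A^*$ to $E_\alpha(a^*(t)t^\alpha)\le E_\alpha(A^* t^\alpha)$, so that
\[
  |x(t)|\le |x_0|\,E_\alpha(A^* t^\alpha)\qquad\text{for all } t\ge 0.
\]
Lemma \ref{lemma:ml-asymp} applied with $\lambda=A^*<0$ then gives $E_\alpha(A^* t^\alpha)=O(t^{-\alpha})\to 0$ as $t\to\infty$, so $\lim_{t\to\infty}x(t)=0$, as desired.

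There is no real obstacle here; the only mildly delicate point is the sign bookkeeping needed to convert the two-sided bounds of Theorem \ref{thm:nonlin0} into a single estimate on $|x(t)|$ that is uniform in the sign of $x_0$. Once that is done, the argument is an essentially routine transcription of the proof of Corollary \ref{cor:stability-lin} to the nonlinear setting, with $a^*$ now defined by \eqref{eq:def-astar} rather than directly by the linear coefficient.
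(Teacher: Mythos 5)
Your argument is correct and is exactly the ``immediate generalization of the proof of Corollary \ref{cor:stability-lin}'' that the paper invokes without writing out: replace Theorem \ref{thm.divergence.rate} by the two-sided bounds of Theorem \ref{thm:nonlin0}, reduce both sign cases to $|x(t)|\le|x_0|\,E_\alpha(a^*(t)t^\alpha)$, and finish with the monotonicity of $E_\alpha$ and Lemma \ref{lemma:ml-asymp}. The sign bookkeeping you supply is the only detail the paper omits, and you handle it correctly.
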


The proof is an immediate generalization of the proof of Corollary \ref{cor:stability-lin}.
We omit the details.

\vskip 3pt

We now give up the requirement that $f(t, 0) = 0$. To this end, we essentially follow the standard
procedure in the analysis of stability properties of differential equations; cf., e.g., \cite[Remark 7.4]{Di2010}.


\begin{theorem}
	\label{thm:nonlin-general}
	Assume the hypotheses of Theorem \ref{thm:fundamental}, and let $x_{10} < x_{20}$.
	Then, for any $t \in J$ we have
	\[
		(x_2(0) - x_1(0)) E_\alpha (\tilde a_*(t) t^\alpha)
			\le x_2(t) - x_1(t)
			\le (x_2(0) - x_1(0)) E_\alpha (\tilde a^*(t) t^\alpha)
	\]
	where
\vskip -12pt
	\begin{subequations}
	\begin{equation}
		\label{eq:tilde_a_star_inf}
		\tilde a_*(t) = \inf_{s \in [0, t], \, x \ne 0} \frac{f(s , x + x_1(s)) - f(s, x_1(s))}{x}
	\end{equation}
\vskip -2pt \noindent
	and
\vskip -12pt
	\begin{equation}
		\label{eq:tilde_a_star_sup}
		\tilde a^*(t) = \sup_{s \in [0, t], \, x \ne 0} \frac{f(s , x + x_1(s)) - f(s, x_1(s))}{x}.		
	\end{equation}
	\end{subequations}
\end{theorem}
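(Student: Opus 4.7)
The plan is to reduce the general case to the situation of Theorem \ref{thm:nonlin0} via the standard shift trick that turns the equation for the difference $u := x_2 - x_1$ into an equation with a vanishing zero.

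First, I would set $u(t) := x_2(t) - x_1(t)$ and note that $u(0) = x_{20} - x_{10} > 0$ by assumption. Subtracting the two versions of the differential equation gives
\[
{}^{C\!}D_{0+}^{\alpha} u(t) = f(t, x_2(t)) - f(t, x_1(t)) = f(t, u(t) + x_1(t)) - f(t, x_1(t)) =: g(t, u(t)).
\]
By construction $g(t, 0) = 0$ for every $t \in J$. Moreover, $g$ inherits continuity from the continuity of $f$ and of $x_1$ (the latter supplied by Theorem~\ref{thm:fundamental}(i)), and it inherits the Lipschitz condition \eqref{eqn.Lip} with the same function $L$:
\[
|g(t, x) - g(t, y)| = |f(t, x + x_1(t)) - f(t, y + x_1(t))| \le L(t) |x - y|.
\]
Hence $u$ is the unique solution of the initial value problem ${}^{C\!}D_{0+}^{\alpha} u = g(t, u)$, $u(0) = x_{20} - x_{10}$, and $g$ satisfies all hypotheses required in the statement of Lemma \ref{lemma:astar} and Theorem \ref{thm:nonlin0}.

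Next, applying Lemma~\ref{lemma:astar} to $g$ shows that the quantities
\[
\tilde a_*(t) = \inf_{s\in[0,t],\,x\ne 0}\frac{g(s,x)}{x}, \qquad \tilde a^*(t) = \sup_{s\in[0,t],\,x\ne 0}\frac{g(s,x)}{x}
\]
are well defined and bounded; these coincide with the expressions \eqref{eq:tilde_a_star_inf} and \eqref{eq:tilde_a_star_sup}. Since $u(0) > 0$, Theorem~\ref{thm:nonlin0}(i), applied to $u$ and $g$ in place of $\varphi$ and $f$, yields at once
\[
(x_{20} - x_{10}) E_\alpha(\tilde a_*(t) t^\alpha) \le u(t) \le (x_{20} - x_{10}) E_\alpha(\tilde a^*(t) t^\alpha),
\]
which is exactly the desired estimate after reinserting $u(t) = x_2(t) - x_1(t)$.

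The only subtle point is verifying that the shifted right-hand side $g$ still fits the abstract framework used in Theorem~\ref{thm:nonlin0}; in particular one must observe that $x_1$, being a fixed continuous function on $J$, does not spoil either the continuity or the (uniform-in-$x$) Lipschitz estimate. Once this bookkeeping is done, no new analytic work is required: the Mittag-Leffler comparison done in Theorem~\ref{thm:nonlin0} delivers the bounds directly.
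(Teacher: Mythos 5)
Your proposal is correct and follows essentially the same route as the paper: the authors likewise define $\tilde f(t,x) = f(t, x + x_1(t)) - f(t, x_1(t))$, observe that $\tilde x := x_2 - x_1$ solves the shifted equation with $\tilde f(t,0)=0$ and the same Lipschitz function $L$, and then apply Theorem~\ref{thm:nonlin0}(i). No substantive differences.
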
 

\begin{proof}
	First, we note that, in view of Theorem \ref{thm:fundamental}(iii), we have
	$x_1(t) < x_2(t)$ for all $t \in J$. Then we define the function
	\[
		\tilde f(t, x) = f(t, x + x_1(t)) - f(t, x_1(t))
	\]
\vskip -3pt \noindent
	and notice that
	\[
		^{C\!}D_{0+}^{\alpha} (x_2 - x_1)(t)
			= {}^{C\!}D_{0+}^{\alpha} x_2(t) - {}^{C\!}D_{0+}^{\alpha} x_1 (t)
			= f(t, x_2(t)) - f(t, x_1(t)),
	\]
	so that the function $\tilde x := x_2 - x_1$ satisfies the differential equation
	\[
		^{C\!}D_{0+}^{\alpha} \tilde x(t) = \tilde f(t, \tilde x(t))
	\]
	and the initial condition $\tilde x(0) = x_2(0) - x_1(0) > 0$.
	Moreover, $\tilde f(t,0) = 0$ for all $t$, and $\tilde f$ satisfies the Lipschitz condition \eqref{eqn.Lip} with the same
	Lipschitz bound $L(t)$ as $f$ itself. This implies that the quantities $\tilde a_*(t)$
	and $\tilde a^*(t)$ exist and are finite. Furthermore, we may apply Theorem \ref{thm:nonlin0}(i) to the function
	$\tilde x$ and derive the claim.
\end{proof} 

Note that Theorem \ref{thm:nonlin-general} is the only result in Section \ref{sec:newresults} whose application in
practice requires the knowledge of an exact solution to the given differential equation. All other results are solely based
on information about the given function $f$ on the right-hand side of the differential equation.


\section{An application example} \label{sec:example}

\setcounter{equation}{0}\setcounter{theorem}{0}

As an application example, we consider the linear differential equation
\begin{equation}
	\label{eq:ex1}
	{}^{C\!} D^\alpha_{0+} x(t) = - \frac 1 2 ( 1 + 4 t +3 \cos 4 t) x(t)
\end{equation}
for $t \in [0,\infty)$.
In the notation of Subsection \ref{subs:lin}, we have
\[
	a(t) =  - \frac 1 2 ( 1 + 4 t +3 \cos 4 t).
\]
The function $a^*$ defined in Theorem \ref{thm.divergence.rate} satisfies
\[
	A^* := \sup_{t \ge 0} a^*(t) = \sup_{t \ge 0} a(t) < 0;
\]
therefore, by Corollary \ref{cor:stability-lin}, the equation is
asymptotically stable and hence a prototype of a class of problems
that is particularly relevant in practice.

\vskip 3pt

For the purposes of concrete experiments, we restrict our attention
to the interval $J = [0, T]$ with $T = 6$.
We have plotted the function $a$ and the
associated funtions $a_*$ and $a^*$ on this interval in Fig.\ \ref{fig:plot-a}.
In particular, we can compute (and see in the figure) that
\[
	a_*(T) = a_*(6) = - \frac{25}2 - \frac 3 2 \cos(24) \approx - 13.136
\]
\vskip -6pt \noindent
and
\vskip -14pt
\[
	A^* = a(\frac 1 4 (\pi - \arcsin \frac 1 3))
		= \frac 1 2 ( \sqrt 8 - 1 - \pi + \arcsin \frac 1 3 ) \approx  -0.4867.
\]

\begin{figure}
	\centering
	\includegraphics[width=0.8\textwidth]{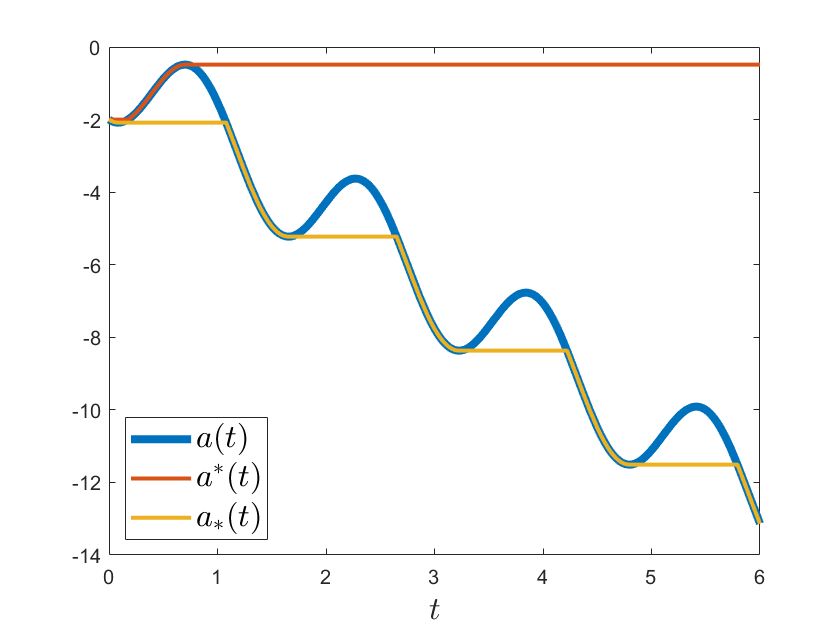} 
	\caption{\label{fig:plot-a} The functions $a(t)$, $a_*(t)$ and $a^*(t)$ for the
		example from eq.\ \eqref{eq:ex1}.}
\end{figure} 

To demonstrate the effectiveness of our new estimates, we choose
$\alpha = 0.65$ and consider two
solutions $x_1$ and $x_2$ to the differential equation \eqref{eq:ex1}
subject to the initial conditions $x_1(0) = 1$ and $x_2(0) = 2$, respectively.
Since exact solutions for these two initial value problems are not available,
we have reverted to numerical solutions instead. To this end, we have used
Garrappa's fast implementation of the fractional trapezoidal method \cite{Ga2015}
that is based on the ideas of Lubich et al.\ \cite{HLS1985,Lu1986}. We have used the
step size $h = 10^{-5}$ which, in combination with the well known
stability properties of this numerical method, allows us to reasonably believe that
the numerical solution is very close to the exact solution.
Figure \ref{fig:ex1} shows the graphs of the two solutions.

\begin{figure} 
	\centering
	\includegraphics[width=0.8\textwidth]{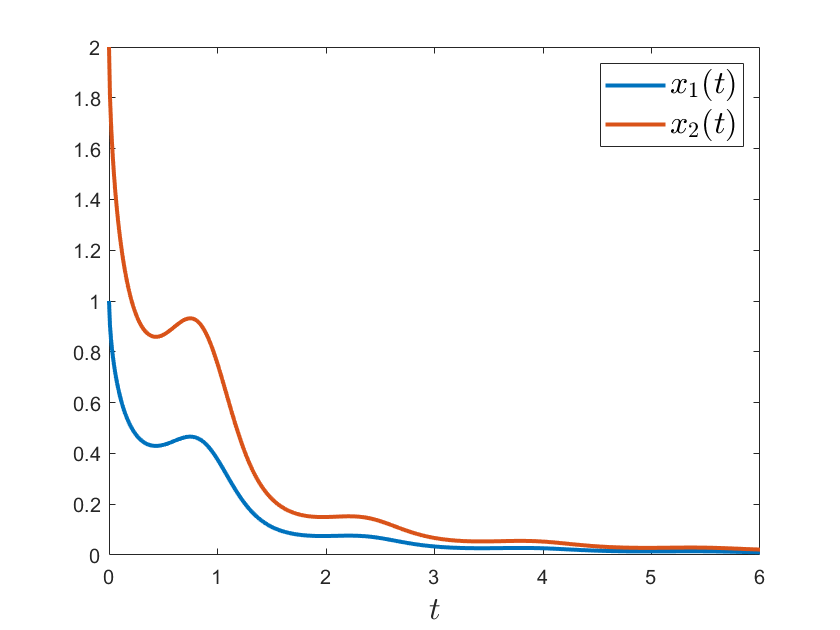} 
	\caption{\label{fig:ex1} The graphs of the solutions $x_1$ and $x_2$ to the
		example from eq.\ \eqref{eq:ex1}.}
\end{figure} 

The essential observation can be read off from Fig.\ \ref{fig:comparison}. Since $a(t) < 0$ for
all $t$ in this example, it can be seen that the function $L$ from the Lip\-schitz condition
of the differential equation's right-hand side is just $L(t) = |a(t)| = -a(t)$, and
hence the function $L^*$ from Theorems \ref{thm:lower} and \ref{thm:upper}
is simply $L^*(t) = - a_*(t)$. Therefore, the old lower bound of Theorem \ref{thm:lower}
is identical to the new bound of Theorem \ref{thm.convergence.rate}. The fact that we
have been unable to improve this bound in the example reflects the fact that the old bound
is already very close to the correct value of the difference between the two functions.
For the two upper bounds, however, we obtain a completely different picture.
While the old bound from Theorem \ref{thm:upper} vastly overestimates the
true value of the difference (note the logarithmic scale on the vertical axis
of Fig.\ \ref{fig:comparison}), the new bound is very much closer. In particular,
our new bound---like the true difference---tends to 0 as $t \to \infty$ whereas
the previously known bound tends to $\infty$.

\begin{figure} 
	\centering
	\includegraphics[width=0.8\textwidth]{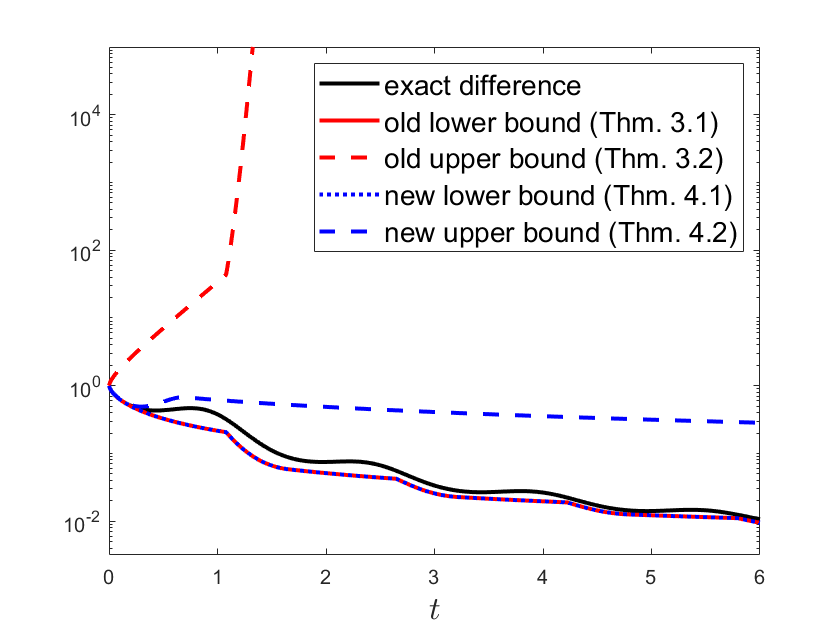}  
	\caption{\label{fig:comparison} Comparison of true differences between the solutions $x_1$ and
		$x_2$ (black) with the associated new upper and lower estimates derived in
		Theorems \ref{thm.convergence.rate} and \ref{thm.divergence.rate}, respectively
		(blue) and the corresponding estimates obtained by previously known methods
		listed in Theorems \ref{thm:lower} and \ref{thm:upper} (red).}
\end{figure}

\section*{Acknowledgement}

Hoang The Tuan was funded by Vingroup JSC and supported by the
Postdoctoral Scholarship Programme of the Vingroup Innovation Foundation (VINIF),
Vingroup Big Data Institute (VinBigdata), under the code\linebreak  VINIF.2021.STS.17.



\end{document}